\newtheorem{theorem}{Theorem}
\newtheorem{assumption}{Assumption}
\newtheorem{definition}{Definition}
\newtheorem{lemma}[theorem]{Lemma}
\newtheorem{proposition}{Proposition}
\newenvironment{proof}[1][Proof]{\noindent\textbf{#1.} }{\ \rule{0.5em}{0.5em}}
\numberwithin{equation}{section}
\newcolumntype{L}[1]{>{\raggedright\let\newline\\arraybackslash\hspace{0pt}}m{#1}}
\newcolumntype{C}[1]{>{\centering\let\newline\\arraybackslash\hspace{0pt}}m{#1}}
\newcolumntype{R}[1]{>{\raggedleft\let\newline\\arraybackslash\hspace{0pt}}m{#1}}
\begin{document}

\begin{titlepage}
\title{Nonexplosion for a large class of superlinear stochastic parabolic equations, in arbitrary spatial dimension}

\author{
Michael Salins\thanks{Email: \href{mailto:msalins@bu.edu}{msalins@bu.edu}} \\ 
    Department of Mathematics and Statistics, Boston University
    \and
    Yuyang Zhang\thanks{Email: \href{mailto:yyz@bu.edu}{yyz@bu.edu}} \\ 
    Questrom School of Business, Boston University 
}

\date{\today}
\maketitle
\begin{abstract}
This paper explores the finite time explosion of the stochastic parabolic equation $\frac{\partial u}{\partial t}(t,x)=Au(t,x)+\sigma(u(t,x))\dot{W}(t,x)$ in arbitrary bounded spatial domain with a large class of space-time colored noise under Neumann, periodic or Dirichlet boundary conditions where $A$ is second-order self-adjoint elliptic operator and $\sigma$ grows like $\sigma(u)\approx C(1+|u|^{\chi})$  where $\chi=1+\frac{1-\eta}{2\beta}$ with $\eta$ and $\beta$ are the parameters related to the singularities of heat kernel and noise covariance kernel. We improve upon previous results by proving the theory in arbitrary spatial dimension, general elliptic operator, general space-time colored noise, a larger class of boundary conditions and proves that $\chi$ can reach the level $1+\frac{1-\eta}{2\beta}$.
\end{abstract}
\setcounter{page}{0}
\thispagestyle{empty}
\end{titlepage}
\pagebreak \newpage
\section{Introduction}
We explore whether solutions to a stochastic heat equation with superlinear multiplicative noise explode in finite time. The equation is
\begin{align}
    \begin{cases}
        \frac{\partial u}{\partial t}(t,x)=Au(t,x)+\sigma(u(t,x))\dot{W}(t,x), x\in D,t>0\\
        Bu(t,x)=0, x\in \partial D, t>0\\
        u(0,x)=u_0(x)\in L^\infty(D), u_0\geq 0.
    \end{cases},\label{eq1}
\end{align}
where $A$ is a self adjoint elliptic operator satisfying the uniform ellipticity condition. $D\subset\mathbb{R}^d$ is a bounded open domain with boundary condition given by $Bu$, which can be Neumann, periodic or Dirichlet. We believe our method applies to a larger class of boundary conditions; however, to keep the argument clear, we restrict our focus to these three, as they are particularly relevant in many contexts. Additionally, in this paper, we focus on positive solutions by assuming $\sigma(0)=0$. Combined with the condition $u_0\geq 0$, this ensures the solution remains positive due to the comparison principle \cite{Kotelenez1992,Mueller199101}.

In a series of works \cite{Mueller1993,Mueller1991,Muller1998,Mueller2000}, Mueller and Sowers established a critical growth rate on $\sigma(u)$ in the case where $A$ is Laplacian, $Bu$ is periodic, $\dot{W}$ is the space-time white noise and spatial dimension $d=1$. If $|\sigma(u)|\leq C(1+|u|^{\gamma})$ for $\gamma<\frac{3}{2}$ then solutions cannot explode in finite time with probability 1. Conversely, if $\sigma(u) =c|u|^{\gamma}$ for some $c>0$ and $\gamma>\frac{3}{2}$,
solutions will blow up with positive probability. In the critical case $\gamma=\frac{3}{2}$, solutions cannot explode\cite{salins2025}.

In higher dimensions, the introduction of colored noise is necessary to ensure that solutions remain well-defined as functions, as demonstrated by \cite{DaPrato_Zabczyk_2014,Dalang1999}. Franzova \cite{Franzova1999} extended Mueller's result to higher dimensions with spatially homogeneous Riesz kernel noise, considered the case $\sigma(u)=u^{\gamma}$ under Dirichlet boundary conditions, and showed the existence result when $\gamma<1+\frac{1-\alpha/2}{d}$. Here, $\alpha$ represents the size of correlation of the colored noise.

In this paper, we extend Franzova’s result by by allowing $\gamma$ to reach $1+\frac{1-\alpha/2}{d}$ and prove an analogous non-explosion result in any spatial dimension, a larger class of driving noises and we allow Neumann, periodic or Dirichlet boundary conditions. While we conjecture that $1+\frac{1-\alpha/2}{d}$ is also a critical threshold, we leave the proof of positive probability explosion for future work.

Building on \cite{Franzova1999,salins2025}, we extend the results by considering bounded domains $D\subset\mathbb{R}^d$ with Neumann, periodic or Dirichlet boundary conditions in arbitrary dimension($d\geq 1$).

Similar results have been extended to various settings, including the reaction–diffusion
equation on unbounded domains\cite{Krylov1996}, fractional heat equations\cite{Foondun2019,Bezdek2018},nonlinear Schrödinger
equation\cite{Debuss2002} and stochastic wave equation\cite{Mueller1997}. More recently, researchers have explored
how adding superlinear deterministic forcing terms $f(u(t,x))$ to the right side of \eqref{eq1} affects the finite-time explosion behavior of the stochastic heat equation  \cite{AGRESTI2023,chen.huang:23:superlinear,FERNANDEZBONDER2009,Foondun2015,Liang2022,Foondun2021,Dalang2019,salins2024,SHANG2022}. Similar explosion phenomena have been studied in the context of the stochastic wave
equation\cite{FOONDUN2022,MILLET2021175}. In our setting, we focus on the case where $f\equiv 0$.

 Recall the standard approach to construct a local unique mild solution. For any $n>0$ define 
\begin{align}
    \sigma_n(u)=\begin{cases}
        \sigma(-n), \textrm{ if }u<-n\\
        \sigma(u), \textrm{ if }u\in [-n,n]\\
        \sigma(n), \textrm{ if }u>n.
    \end{cases}.\label{def:u_n}
\end{align}
Then for each $n$, $\sigma_n$ is globally Lipschitz continuous and there exists a unique global mild solution
\begin{align}
    u_n(t,x)=\int_D G(t,x,y)u(0,y)dy+\int_0^t\int_D G(t-s,x,y)\sigma_n(u_n(s,y))\dot{W}(dyds).\label{eq: mild u_n}
\end{align}
 Then we can define the unique local mild solution for \eqref{eq1}
\begin{align}
    u(t,x):=u_n(t,x) \textrm{ for all }x\in D \textrm{ and }t\in [0,\tau_n^\infty].\label{def:loc u}
\end{align}
where 
\begin{align}
    \tau_n^\infty:=\inf\left\{t>0:\sup_{x\in D}u_n(t,x)\geq n\right\}.\label{def:tau_n}
\end{align}
Define the explosion time
\begin{align}
    \tau_\infty^\infty &=\sup_n\tau_n^\infty. \label{def:tau_inf^inf}
\end{align}
A local mild solution explodes in finite time if $\tau_\infty^\infty<\infty$. A local mild solution is called a global mild solution if the solution never explodes with probability one, $\mathbb{P}(\tau_\infty^\infty<\infty)=1$.

Our proof is inspired by the approach in \cite{salins2025}, which leverages the quadratic variation of the spatial $L^1$ norm to establish the result for the critical case where the growth rate is $\frac{3}{2}$ in the one-dimensional heat equation. As in the one-dimensional case, the first step is to show that the spatial $L^1$ norm remains finite. The spatial $L^1$ norm of $u(t,x)$ is a local martingale when Neumann or periodic boundary conditions are imposed. Because the spatial integral $\int_D G(t,x,y)dx = 1$ for all $y \in D$, by integrating \eqref{eq: mild u_n} it follows that
\begin{equation}
    |u(t)|_{L^1(D)} := \int_D u(t,x)dx = I(t),
\end{equation}
where
\begin{align}
    I(t):=\int_D u(0,x)dx+\int_0^t\int_D\sigma(u(s,y))\dot{W}(dyds)\label{eq3}
\end{align}
which is a local martingale.

In the setting of Dirichlet boundary condition, the value of $\int_DG(t,x,y)dx$ will depend on $t$ and $x$. $|u(t)|_{L^1(D)}$ is not a semimartingale or even a local semimartingale. For this reason, we originally thought that the arguments of \cite{salins2025} were only applicable for Neumann or periodic boundary conditions. This article proves that we can extend \cite{Franzova1999} so $\gamma$ can reach $1+\frac{1-\alpha/2}{d}$ even in Dirichlet setting. Despite the fact that $|u(t)|_{L^1}$ is not a local martingale, we make the important observation that in the Dirichlet setting, $|u(t)|_{L^1(D)}\leq I(t)$ with probability 1 where $I(t)$ is the positive local martingale defined in \eqref{eq3}. Thus, even when Dirichlet boundary conditions are imposed, the spatial $L^1$ norm of $u(t,x)$ cannot explode in finite time and we have access to sensitive estimates based on the quadratic variation of $I(t)$. 

We define an $I$ stopping time for $M>0$:
\begin{align}
    \tau_M^I:=\inf\{t\in [0,\tau_\infty^\infty]:I(t)>M\}.\label{eq4}
\end{align}
where $I(t)$ is defined by $\eqref{eq3}$. Using Doob's submartingale inequality we can prove that for any $T>0$ the $L^1$ norm $\int_Du(t\land \tau_n^{\infty},x)dx$ cannot explode before $T$. The estimate is independent of $n$. Then we show that the quadratic variation of \eqref{eq3} is bounded in a way that is independent of $n$. It is proven in Lemma \ref{lem:quad} that
\begin{align}
        \mathbb{E}\int_0^{ \tau_M^I}\int_D\int_D\Lambda(x,y)\sigma(u(s,x))\sigma(u(s,y))dxdyds\leq M^2.\label{eq5}
\end{align}
Where $\Lambda$ is the covariance kernel. After that, we prove a similar $L^\infty$ bound on the stochastic convolution in Theorem \ref{thm:factorization} that we believe is novel and may be of independent interest. The proof is much more complicated than the space-time white noise case investigated in \cite{salins2025}.
\begin{theorem}\label{thm:factorization}
Let $p$ be large enough that $\frac{1+\beta}{p}<\frac{1-\eta}{2}-\frac{\beta}{p-2}$, where $\beta$ and $\eta$ are given in Assumption \ref{assumption3} in section 2 and assume that $\varphi(t,x)$ is adapted and bounded.
Define the stochastic convolution
\begin{align}
    Z^{\varphi}(t,x)=\int_0^t\int_D G(t-s,x,y)\varphi(s,y)\dot{W}(dyds).\label{eq33}
\end{align}
There exists $C_p>0$, independent of $T>0$ and $\varphi$ such that
    \begin{align}
     \begin{split}
          &\mathbb{E}\sup_{t\in [0,T]}\sup_{x\in D}|Z^{\varphi}(t,x)|^p\\&\leq  C_p T^{\frac{(1-\eta)(p-2)}{2}-2\beta} \mathbb{E}\int_0^T\int_D\int_D|\varphi(s,y_1)\varphi(s,y_2)|^{\frac{p}{2}}\Lambda(y_1,y_2)dy_1dy_2ds.
     \end{split}\label{eq34}
\end{align}
\end{theorem}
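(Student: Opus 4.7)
The plan is to invoke the Da Prato--Kwapie\'n--Zabczyk factorization trick. Fix $\alpha\in(1/p,1/2)$, to be pinned down later, and start from the identity
\begin{align*}
\int_s^t (t-r)^{\alpha-1}(r-s)^{-\alpha}\,dr = \frac{\pi}{\sin(\pi\alpha)}.
\end{align*}
Using this together with the semigroup property for $G$ and a stochastic Fubini argument, I would write
\begin{align*}
Z^\varphi(t,x) = \frac{\sin(\pi\alpha)}{\pi}\int_0^t\int_D (t-r)^{\alpha-1}G(t-r,x,y)\,Y^\varphi(r,y)\,dy\,dr,
\end{align*}
where the auxiliary stochastic convolution is
\begin{align*}
Y^\varphi(r,y):=\int_0^r\int_D (r-s)^{-\alpha}G(r-s,y,z)\varphi(s,z)\,\dot W(dz\,ds).
\end{align*}

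Next I would apply H\"older's inequality in the outer $(r,y)$ integral with conjugate exponents $p$ and $q=p/(p-1)$, obtaining, uniformly in $t\in[0,T]$ and $x\in D$,
\begin{align*}
|Z^\varphi(t,x)|^p \leq C_\alpha\left(\int_0^t\int_D (t-r)^{(\alpha-1)q}G(t-r,x,y)^q\,dy\,dr\right)^{p/q}\int_0^T\int_D |Y^\varphi(r,y)|^p\,dy\,dr.
\end{align*}
Assumption \ref{assumption3} gives a spatial $L^q$ bound on $G(t,x,\cdot)$ whose time singularity is governed by $\eta$; combined with $(t-r)^{(\alpha-1)q}$, the deterministic factor can be controlled by $C_p T^{\gamma_1}$ for an explicit $\gamma_1$, provided $\alpha$ is close enough to $1$ and $p$ is large enough that both singularities are integrable. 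This is exactly where the hypothesis $(1+\beta)/p<(1-\eta)/2-\beta/(p-2)$ enters. After taking $\sup_{t\leq T}\sup_{x\in D}$ and expectation, the problem reduces to bounding $\mathbb{E}\int_0^T\int_D|Y^\varphi(r,y)|^p\,dy\,dr$.

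To estimate $\mathbb{E}|Y^\varphi(r,y)|^p$, I would apply the Burkholder--Davis--Gundy inequality, which yields
\begin{align*}
\mathbb{E}|Y^\varphi(r,y)|^p \leq C_p\,\mathbb{E}\left(\int_0^r\int_D\int_D (r-s)^{-2\alpha}G(r-s,y,z_1)G(r-s,y,z_2)\Lambda(z_1,z_2)\varphi(s,z_1)\varphi(s,z_2)\,dz_1\,dz_2\,ds\right)^{\!p/2}.
\end{align*}
The technical heart of the argument --- and the step I expect to be the main obstacle --- is to push the $p/2$ power inside the triple integral while recovering exactly the target form $|\varphi(s,z_1)\varphi(s,z_2)|^{p/2}\Lambda(z_1,z_2)$, with the remaining factors absorbed into powers of $T$. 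I would do this by viewing $(r-s)^{-2\alpha}G(r-s,y,z_1)G(r-s,y,z_2)\Lambda(z_1,z_2)$ as an unnormalized density in $(s,z_1,z_2)$, applying Jensen's inequality to pull the $p/2$ power inside, and estimating the total mass with the help of the $\beta$-parameter bound on $\int\!\!\int G(\tau,y,z_1)G(\tau,y,z_2)\Lambda(z_1,z_2)\,dz_1 dz_2$. I would then integrate in $y$ using a two-kernel estimate of the form $\int_D G(\tau_1,y,z_1)G(\tau_2,y,z_2)\,dy\leq C\,G(\tau_1+\tau_2,z_1,z_2)$ and interchange orders of integration. The remaining Beta-function type time integrals combine with the exponent $\gamma_1$ from the deterministic step to yield exactly the claimed factor $T^{(1-\eta)(p-2)/2-2\beta}$. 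Threading the three small-parameter conditions on $\alpha$, $\eta$, and $\beta$ through these nested H\"older/Jensen estimates, so that all integrability requirements are satisfied simultaneously and the exponents collapse to the stated value, is the delicate bookkeeping that the proof has to carry out.
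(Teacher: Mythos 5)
Your skeleton (factorization, H\"older in the outer integral with exponents $p$ and $p/(p-1)$, then BDG for the auxiliary convolution $Y^\varphi=Z^\varphi_\alpha$) is exactly the paper's, but the step you yourself flag as ``the main obstacle'' is where the argument breaks, and the fix is not mere bookkeeping. After Jensen with the symmetric density $(r-s)^{-2\alpha}G(r-s,y,z_1)G(r-s,y,z_2)\Lambda(z_1,z_2)\,dz_1dz_2ds$, the term carrying $|\varphi(s,z_1)\varphi(s,z_2)|^{p/2}$ still contains \emph{both} heat kernels. When you then integrate in $y$ via $\int_D G(\tau,y,z_1)G(\tau,y,z_2)\,dy\le C\,G(2\tau,z_1,z_2)$ and bound $G(2\tau,z_1,z_2)\le C\tau^{-\beta}$ (Assumption \ref{assumption3}(A) -- there is no other way to discard the remaining kernel while keeping $\Lambda(z_1,z_2)|\varphi\varphi|^{p/2}$ intact), you are left with a time integral $\int_0^r(r-s)^{-2\alpha-\beta}\,ds$, which diverges whenever $2\alpha+\beta\ge 1$, in particular for every $\beta\ge 1$. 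Since $\beta=\tfrac d2$ in the paper's motivating examples, your route fails in exactly the higher-dimensional regime the theorem is built for (it would only work for $d=1$-type kernels). A second, smaller slip: your window $\alpha\in(1/p,1/2)$ with ``$\alpha$ close to $1$'' is off; the deterministic factor needs $\alpha>\tfrac{1+\beta}{p}$ because $G^{p/(p-1)}=G^{1/(p-1)}G\le Ct^{-\beta/(p-1)}G$ (the singularity there is governed by $\beta$, not $\eta$), and the stochastic side needs $2\alpha+\tfrac{2\beta}{p-2}+\eta<1$; the hypothesis of the theorem is precisely the nonemptiness of $\bigl(\tfrac{1+\beta}{p},\tfrac{1-\eta}{2}-\tfrac{\beta}{p-2}\bigr)$.

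The paper circumvents the divergence by treating the two kernels asymmetrically. It computes $\int_0^T\!\int_D(\cdots)^{p/2}dx\,dt$ by $L^{p/(p-2)}$--$L^{p/2}$ duality and applies H\"older with respect to the measure $(t-s)^{-2\alpha}G(t-s,x,y_2)\Lambda(y_1,y_2)\,dy_1dy_2\,ds\,dx\,dt$, splitting $|h|\,G(t-s,x,y_1)\,|\varphi\varphi|$ so that the dual factor $A$ receives $|h|^{p/(p-2)}G^{p/(p-2)}(t-s,x,y_1)G(t-s,x,y_2)\Lambda$ --- controlled via $G^{2/(p-2)}\le Ct^{-2\beta/(p-2)}$ and Assumption \ref{assumption3}(B), so the $\beta$-singularity enters only as the harmless $2\beta/(p-2)$ --- while the factor $B$ carrying $|\varphi\varphi|^{p/2}$ retains only the single kernel $G(t-s,x,y_2)$, which is removed by $\int_D G(t-s,x,y_2)\,dx\le 1$ with no time singularity at all. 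Your exponents would formally collapse to the right value $T^{(1-\eta)(p-2)/2-2\beta}$ if the integrals converged, but they do not; you need to replace the symmetric Jensen step by this duality/asymmetric H\"older argument (or an equivalent interpolation that keeps only one kernel next to $|\varphi\varphi|^{p/2}$).
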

Furthermore, because
\begin{align}
    \begin{split}
        &\mathbb{E}\int_0^T\int_D\int_D|\varphi(s,y_1)\varphi(s,y_2)|^{\frac{p}{2}}\Lambda(y_1,y_2)dy_1dy_2ds\\
    \leq &\mathbb{E}\sup_{t\in[0,T]}|\varphi(t)|_{L^\infty(D)}^{p-2}\int_0^T\int_D\int_D|\varphi(s,y_1)\varphi(s,y_2)|\Lambda(y_1,y_2)dy_1dy_2ds,
    \end{split}
\end{align}
we can bound the stochastic convolution by \eqref{eq5}. The proof of Theorem \ref{thm:factorization} is in section 5.
Finally we can show solution will not blow up in finite time by using a Borel-Cantelli argument to show the $L^\infty$ norm can double only a finite number of times.

In section 2 we will present the assumptions and the main result for this paper. In section 3 there will be an explicit example. In section 4 we will show the $L^1$ norm of $u(t,x)$ will not explode in finite time. In section 5 we will show the stochastic convolution is bounded by the quadratic variation. Finally in section 6 we will prove the $L^\infty$ norm will not explode.
\section{Assumptions and Results}
Let $L^p:=L^p(D),p\geq 1$ denote the standard $L^p$ spaces on $D$ where $D$ is given in Assumption \ref{assumption1} endowed with the norms
\begin{align}
    |\varphi|_{L^p}&:=\Big(\int_D |\varphi(y)|^pdy\Big)^{\frac{1}{p}},p\in [1,\infty),\label{def:Lp}\\
    |\varphi|_{L^\infty}&:=\sup_{x\in D}|\varphi(x)|.\label{def:L infty}
\end{align}
\begin{assumption}\label{assumption1}
    $D\subset\mathbb{R}^d$ is an open, connected and bounded domain. $A$ is a second-order self-adjoint elliptic operator, which means
    \begin{align}
        Au(x)=\sum_{i=1}^d\sum_{j=1}^d\frac{\partial}{\partial x_i}(a_{ij}(x)\frac{\partial}{\partial x_j}u(x)), \label{def:diff operator}
    \end{align}
    for some symmetric $a_{ij}\in C^2(\overline{D})$ that satisfy uniformly elliptic condition
    \begin{align}
        a_{ij}=a_{ji},\sum_{i,j=1}^da_{ij}(x)\xi_i\xi_j\geq \theta|\xi|^2, \label{assum:unif ellip}
    \end{align}
    for some $\theta>0$, and all $x\in D$ and all $\xi\in \mathbb{R}^d$. We assume either:
    \begin{itemize}
        \item [(a)] Periodic boundary condition,
        \item [(b)] Neumann boundary condition:
        \begin{align}
             Bu(t,x)=\nabla u(t,x)\cdot(a(x)\vec{n}(x))=0,x\in\partial D,\label{Boundary:Nemann}
        \end{align}
         where $\vec{n}(x)$ is the outer pointing normal vector,
     \item [(c)] or Dirichlet boundary condition:
         \begin{align}
             u(t,x)=0, x\in \partial D.
         \end{align}
    \end{itemize}
   Because $A$ is self-adjoint, there exist eigenvalues/eigenfunctions of the realization of $A$ in $L^2(D)$, with the imposed boundary conditions, are given by
\begin{align}
    A e_k=-\alpha_k e_k, \ \ \ \  0\leq \alpha_1\leq \alpha_2\leq \cdots\leq\alpha_k\leq \cdots.\label{eq6}
\end{align}
The corresponding fundamental solution is denoted by $G(t,x,y)$ where
\begin{align}
    G(t,x,y)=\sum_{k=1}^\infty e^{-\alpha_k t}e_k(x)e_k(y).\label{eq:heat kernel}
\end{align}
The fundamental solution has the property that for any $\phi \in L^2(D)$, we have
$u(t,x) = \int_D G(t,x,y)\phi(y)dy$
solves the heat equation $\frac{\partial u}{\partial t}(t,x) = A u(t,x)$ with initial value $u(0,x) = \phi(x)$.
\end{assumption}

\begin{definition}
  The colored noise is a Gaussian with the formal covariance structure
\begin{align}
    \mathbb{E}[\dot{W}(t,x)\dot{W}(s,y)]=\delta_0(t-s)\Lambda(x,y).\label{eq8}
\end{align}
Where $\delta_0$ is the Dirac delta measure and $\Lambda(x,y)$ is a positive definite function which describes the correlation. If it is space-time white noise then $\Lambda(x,y)=\delta_0(x-y)$. More rigorously, for any adapted $\varphi,\psi \in C_0^\infty([0,T]\times D)$ test functions, 
\begin{align}
     \begin{split}
         &\mathbb{E}\Big(\int_0^T \int_D \varphi(s,y)W(dyds)\Big)\Big(\int_0^T \int_D \psi(s,y)W(dyds)\Big)\\
    = & \mathbb{E}\int_0^T\int_D\int_D\varphi(s,y_1)\psi(s,y_2)\Lambda(x,y)dy_1dy_2ds.
     \end{split}
\end{align}
\end{definition}
\begin{assumption}\label{assumption2}
    The covariance kernel $\Lambda(x,y)$ is positive and positive definite.
\end{assumption}
We assume the following on the fundamental solution and the noise covariance kernel, see \cite{Cerrai2003, DaPrato_Zabczyk_2014}.

\begin{assumption}\label{assumption3}
    There exists $C>0, \beta>0$ and $\eta\in (0,1)$ s.t.
    \begin{itemize}
        \item [(A)]$\sup_{x,y\in D}G(t,x,y)\leq Ct^{-\beta}$,
        \item [(B)]$\sup_{x\in D}\int_D\int_D G(t,x,y_1)G(t,x,y_2)\Lambda(y_1,y_2)dy_1dy_2\leq Ct^{-\eta}$,
        \item [(C)]$\int_D\int_D\Lambda(y_1,y_2)dy_1dy_2<\infty$.
    \end{itemize}
\end{assumption}
One can check the local mild solution of SPDE is function-valued under these conditions and the point $B$ comes from Appendix $B$ in \cite{Franzova1996}.

\begin{assumption}\label{assumption4}
    $\sigma:\mathbb{R} \to \mathbb{R}$ is locally Lipschitz continuous, $\sigma(0)=0$ and
    \begin{align}
        |\sigma(u)|\leq C(1+|u|^{1+\frac{1-\eta}{2\beta}}),\label{eq9}
    \end{align}
    where $\eta$ and $\beta$ are from Assumption \ref{assumption3}, WLOG we can assume $\sigma\geq 0$.
\end{assumption}
In Mueller's setting \cite{Mueller1991,Muller1998,Mueller2000}, $\beta=\frac{1}{2}$ and $\eta=\frac{1}{2}$ leading to the same $|u|^{\frac{3}{2}}$ condition. Additionally, our result extends Mueller's results to Dirichlet and Neumann settings.
We show in section 3 when the color noise uses a spatially homogeneous Riesz kernel restricted on a bounded domain with paramater $\alpha$ as is shown in \cite{Franzova1999}, $1+\frac{1-\eta}{2\beta}$ becomes $1+\frac{1-\alpha/2}{d}$. 
\begin{assumption}\label{assumption5}
    \begin{align}
        u(0,x)\geq 0 \text{ for a.e. } x\in D. \label{assum: positivity}
    \end{align}
\end{assumption}
Assumption \ref{assumption5} combing with $\sigma(0)=0$ guarantees that the solution remains positive due to the comparison principle\cite{Kotelenez1992,Mueller199101}.

\noindent Theorem \ref{thm:nonexplosion} proves that when assumptions \ref{assumption1}-\ref{assumption5} are satisfied, the mild solution is global.
\begin{theorem}\label{thm:nonexplosion}
    Assume Assumption \ref{assumption1}-\ref{assumption5} and the initial data $x\mapsto u(0,x)\in L^{\infty}(D)$, then there exists a unique global mild solution to \eqref{eq1}.
\end{theorem}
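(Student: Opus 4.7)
The plan is to show $\mathbb{P}(\tau_\infty^\infty \leq T) = 0$ for every $T > 0$. A first reduction uses the stopping time $\tau_M^I$ from \eqref{eq4}. Because $I(t)$ in \eqref{eq3} is a nonnegative local martingale (hence a supermartingale) with $I(0) = |u(0)|_{L^1(D)}$, Doob's inequality yields $\mathbb{P}(\tau_M^I \leq T) \leq |u(0)|_{L^1(D)}/M$, which vanishes as $M \to \infty$. It therefore suffices to estimate $\mathbb{P}(\tau_\infty^\infty \leq T \wedge \tau_M^I)$ for each fixed $M$. On the event $\{\tau_M^I > T\}$, Lemma~\ref{lem:quad} furnishes the quadratic-variation bound \eqref{eq5}, which we feed into Theorem~\ref{thm:factorization}; the inequality $|u(t)|_{L^1(D)} \leq I(t)$ from the introduction ensures the reduction works uniformly in the choice of boundary conditions.

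Next, I implement a Borel--Cantelli argument on dyadic doubling times. Set $\rho_k := \inf\{t \geq 0 : |u(t)|_{L^\infty(D)} \geq 2^k\} = \tau_{2^k}^\infty$, so $\tau_\infty^\infty = \sup_k \rho_k$. Using the mild formulation
\[
u(t,x) = \int_D G(t,x,y)u(0,y)\,dy + Z^{\sigma(u)}(t,x)
\]
together with the inequality $\int_D G(t,x,y)\,dy \leq 1$ (valid under any of the three admissible boundary conditions), the deterministic part has $L^\infty(D)$ norm bounded by $|u(0)|_{L^\infty(D)}$ uniformly in $t$. Consequently, for $k$ with $2^k \geq 2|u(0)|_{L^\infty(D)}$, the doubling event $\{\rho_{k+1} \leq T \wedge \tau_M^I\}$ forces $\sup_{t \leq T \wedge \tau_M^I}|Z^{\sigma(u)}(t)|_{L^\infty(D)} \geq 2^k$. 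Applying Theorem~\ref{thm:factorization} to $\varphi := \sigma(u)\mathbf{1}_{[0,\rho_{k+1}\wedge\tau_M^I]}$, which is pointwise bounded by $C\, 2^{(k+1)\chi}$ with $\chi = 1 + (1-\eta)/(2\beta)$, and combining with the H\"older-type estimate displayed after \eqref{eq34} and the quadratic-variation bound, one obtains
\[
\mathbb{E}\sup_{t \leq T}|Z^{\varphi}(t)|_{L^\infty(D)}^p \leq C_p\, T^{\gamma}\, 2^{(k+1)\chi(p-2)}\, M^2,
\]
where $\gamma := (1-\eta)(p-2)/2 - 2\beta$. Chebyshev's inequality at level $2^k$ then gives $\mathbb{P}(\rho_{k+1} \leq T \wedge \tau_M^I) \leq C\, T^\gamma\, M^2\, 2^{k[\chi(p-2)-p]}$.

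The hard part will be closing the summability of these probabilities at the critical growth $\chi = 1 + (1-\eta)/(2\beta)$: the exponent $\chi(p-2)-p$ is negative only for $p < 2 + 4\beta/(1-\eta)$, while Theorem~\ref{thm:factorization} forces $p$ to be large, so the two requirements are in tension. To bridge this gap, following the strategy of \cite{salins2025}, I intend to refine the above estimate by partitioning $[0,T]$ into $N = 2^{\mu k}$ subintervals of length $T/N$: using the Markov property at the partition points together with the contractivity $|S_t f|_{L^\infty(D)} \leq |f|_{L^\infty(D)}$, one can bound $\sup_{t \leq T}|Z^{\varphi}(t)|_{L^\infty(D)}$ in terms of fresh stochastic convolutions on each subinterval and then apply Theorem~\ref{thm:factorization} on each piece, where the leading temporal factor becomes $(T/N)^\gamma$. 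Choosing $p$ large enough that $\gamma$ grows faster than the loss from the union bound, and then tuning $\mu$, should produce a net exponent in $k$ that is strictly negative, delivering $\sum_k \mathbb{P}(\rho_{k+1} \leq T \wedge \tau_M^I) < \infty$. Borel--Cantelli, combined with the monotonicity of $\{\rho_k \leq T \wedge \tau_M^I\}$ in $k$, then yields $\mathbb{P}(\tau_\infty^\infty \leq T \wedge \tau_M^I) = 0$; sending $M \to \infty$ completes the proof.
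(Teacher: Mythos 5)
Your first half matches the paper: the reduction via the nonnegative local martingale $I(t)$, the bound $|u(t)|_{L^1}\leq I(t)$, Doob's inequality, the quadratic variation bound of Lemma \ref{lem:quad}, and a Borel--Cantelli argument on dyadic levels of the $L^\infty$ norm fed by Theorem \ref{thm:factorization}. But the step you yourself flag as ``the hard part'' is a genuine gap, and the fix you sketch does not close it. Your Chebyshev bound $\mathbb{P}(\rho_{k+1}\leq T\wedge\tau_M^I)\leq CT^{\gamma}M^2 2^{k[\chi(p-2)-p]}$ charges the \emph{entire} quadratic-variation budget $M^2$ to every level $k$, so summability must come purely from a negative exponent in $k$; at the critical $\chi=1+\frac{1-\eta}{2\beta}$ that exponent is $k\big(\frac{(1-\eta)(p-2)}{2\beta}-2\big)$, which is positive for the large $p$ that Theorem \ref{thm:factorization} requires. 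Partitioning $[0,T]$ into $2^{\mu k}$ subintervals does not rescue this: restarting at a partition point $t_j$ gives a deterministic part bounded only by $|u(t_j)|_{L^\infty}<2^{k+1}$ via contractivity, so hitting $2^{k+1}$ during that subinterval forces the fresh convolution to exceed only $2^{k+1}-|u(t_j)|_{L^\infty}$, which can be arbitrarily small; and telescoping $Z^{\varphi}$ across subintervals produces a sum (or an $N$-fold union bound at threshold $\lambda/N$) that destroys the gain from $(T/N)^{\gamma}$.

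The paper's mechanism is different in two essential ways. First, it uses doubling \emph{and halving} excursion times $\rho_n$ and, starting from level $2^m$, exploits the $L^1$ bound $|u(\rho_n)|_{L^1}\leq M$ together with Assumption \ref{assumption3}(A) to get $|S_n(t)|_{L^\infty}\leq CMt^{-\beta}$: after the level-dependent time $T_m=(CM/2^{m-2})^{1/\beta}$ the deterministic part has \emph{decayed} to $2^{m-2}$, so either the stochastic convolution exceeds $2^{m-2}$ within time $T_m$ or the norm halves. The factor $T_m^{\gamma}\sim 2^{-m\gamma/\beta}$ then cancels the bad power of $2$ exactly (the combination $(1+\frac{1-\eta}{2\beta})(p-2)m-p(m-2)-(\frac{(1-\eta)(p-2)}{2\beta}-2)(m-2)=2(p-2)$ is independent of $m$), which is precisely what allows $\chi$ to reach the critical value. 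Second, summability over excursions does not come from geometric decay in $m$ at all: each excursion's probability is bounded by the quadratic variation accumulated on $[\rho_n,\rho_{n+1}]$ alone, and these contributions telescope over disjoint excursions to the single budget $M^2$ from Lemma \ref{lem:quad}. Without the level-dependent time scale keyed to the $L^1$ bound and without the excursion decomposition, your argument cannot reach the critical exponent.
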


\section{Examples}

In this section, we describe a large class of examples that satisfy Assumptions \ref{assumption1}--\ref{assumption4}. First we show that our theory applies to \cite{Franzova1999} and improves it from $\gamma<1+\frac{1-\eta}{2\beta}$ to $\gamma\leq 1+\frac{1-\eta}{2\beta}$. Let $D=[0,\pi]^d$ be a rectangular domain in $\mathbb{R}^d$ for some $d\geq 1$. $A=\Delta$ is the  Laplacian. Thus, it is obvious that $A$ and $D$ for this example satisfies Assumption \ref{assumption1}. Use the spatially homogeneous Riesz-kernel noise considered in \cite{Franzova1999} where
\begin{align*}
    \Lambda(x,y)=|x-y|^{-\alpha}\mathbbm{1}_{\{x,y\in D\}},\quad 0<\alpha<\min\{2,d/2\}.
\end{align*}
$\alpha$ indicates the size of correlation and from Appendix $B$ in \cite{Franzova1996} one can check that $\eta=\frac{\alpha}{2}$ which satisfies Assumption \ref{assumption3}$(B)$. This kernel satisfies Assumption \ref{assumption2} by construction. Also, in this setting $\beta=\frac{d}{2}$ for Assumption \ref{assumption3}$(A)$ and because $\alpha<\min\{2,d/2\}$ Assumption \ref{assumption3}$(C)$ and $\eta\in (0,1)$ are satisfied. Here $\sigma(u)=u^{\gamma}$ for $\gamma\leq 1+\frac{1-\eta}{2\beta}=1+\frac{1-\alpha/2}{d}$. From Theorem \ref{thm:nonexplosion} it follows that we extend the result in \cite{Franzova1999} by allowing $\gamma$ to reach the level $1+\frac{1-\alpha/2}{d}$.

We could alternatively use the spectral noise considered in section 5 of Da Prato and Zabczyk's book \cite{DaPrato_Zabczyk_2014} where
\begin{align}
    \Lambda(x,y)=\sum_{k=1}^\infty\lambda_k^2 e_k(x)e_k(y). \label{eq:Cov kernel spec}
\end{align}
In our setting, we additionally require $\Lambda(x,y)$ to be positive pointwise to satisfy Assumption \ref{assumption2}. We can always construct examples of positive $\Lambda$ satisfying \eqref{eq:Cov kernel spec} by defining
\begin{align}
    \begin{split}
        \Lambda(x,y)&=\int_0^\infty s^{\theta-1}e^{-as}G(s,x,y)ds,
    \end{split}\label{eq:Lam def spec}
\end{align}
for any $\theta>0,a>0$ where $G(t,x,y)$ is the fundamental solution. Observe $\Lambda(x,y)\geq 0$ because $G(t,x,y)\geq 0$. Furthermore, $\Lambda$ can be written as \eqref{eq:Cov kernel spec}
\begin{align}
    \begin{split}
        \Lambda(x,y)&=\sum_{k=1}^\infty\Big(\int_0^\infty s^{\theta-1}e^{-(\alpha_k+a) s}ds\Big)e_k(x)e_k(y)\\
        &=\Gamma(\theta)\sum_{k=1}^\infty (a+\alpha_k)^{-\theta}e_k(x)e_k(y), 
    \end{split}\label{eq:Lam def spec2}
\end{align}
where $\Gamma(\theta)$ is the Gamma function.
If Dirichlet boundary condition is imposed, then $\alpha_1>0$ and we can choose $a=0$. Thus, $\lambda_k^2=\Gamma(\theta)(a+\alpha_k)^{-\theta}$.

The quantities from Assumption \ref{assumption3} are bounded by
\begin{enumerate}
    \item[(A)] $\sup_{x \in D} \sup_{y \in D}  \displaystyle {G(t,x,y) \leq  \sum_{k=1}^\infty e^{-\alpha_k t} |e_k|_{L^\infty}^2}$.
    \item[(B)] $\displaystyle{\sup_{x \in D} \int_D \int_D G(t,x,y_1)G(t,x,y_2)\Lambda(y_1,y_2)dy_1dy_2 = \sum_{k=1}^\infty \Gamma(\theta)(a + \alpha_k)^{-\theta }e^{-2\alpha_k t} |e_k(x)|^2}$ 
    \item[(C)] $\displaystyle{\int_D \Lambda(y_1,y_2)dy_1dy_2 = \sum_{k=1}^\infty (a + \alpha_k)^{-\theta} \left<1,e_k\right>_{L^2}^2\leq C \sup_{k} (a + \alpha_k)^{-\theta}}$
\end{enumerate}

In the setting where $A$ is the Laplacian and $D$ is a rectangular domain with periodic, Dirichlet, or Neumann boundary conditions, 
\[\alpha_k \sim k^{\frac{2}{d}} \text{ and } \sup_k |e_k|_{L^\infty} <\infty.\]
In these settings 
\[\sum_{k=1}^\infty \exp\left({-tk^{\frac{2}{d}} }\right) \approx \int_1^\infty \exp\left({-tx^{\frac{2}{d}}}\right)dx = Ct^{-\frac{d}{2}},  \]
and
\[\sum_{k=1}^\infty k^{-\frac{2\theta}{d}}\exp\left(-tk^{\frac{2}{d}}\right) \approx \int_1^\infty x^{-\frac{2\theta}{d}}\exp\left(-t x^{\frac{2}{d}}\right)dx = C t^{\theta - \frac{d}{2} }. \]
Therefore, $\beta = \frac{d}{2}$, $\eta = \max\{\frac{d}{2} - \theta,0\}$, and we require $\theta> \frac{d}{2}-1$ .

Of course, our theory is applicable to any other setting where $\eta$ and $\beta$ can be computed.
\section{The $L^1$ norm of $u(t,x)$}
\begin{proposition}\label{prop:positivity}
Assume Assumption \ref{assumption1}-\ref{assumption5}, let $u(t,x)$ be the local mild solution to \eqref{eq1} then
    \begin{align}
        u(t,x)\geq 0 \text{ for all }t\geq 0\text{ and a.e. }x\in D.\label{eq:positivity}
    \end{align}
\end{proposition}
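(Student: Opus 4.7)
The plan is to establish positivity first for the globally Lipschitz truncated solutions $u_n$ from \eqref{eq: mild u_n} and then transfer the result to the local mild solution $u$ using the identification \eqref{def:loc u}. For each fixed $n$, $\sigma_n$ is globally Lipschitz continuous and $\sigma_n(0)=\sigma(0)=0$. The fundamental solution $G(t,x,y)$ is pointwise nonnegative under Assumption~\ref{assumption1} (by the parabolic maximum principle applied to each of the three boundary conditions), and $\Lambda\geq 0$ by Assumption~\ref{assumption2}, so the structural hypotheses for an SPDE comparison principle are in place.

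I would then apply the comparison theorem of Kotelenez~\cite{Kotelenez1992} (see also \cite{Mueller199101}) to the Lipschitz SPDE \eqref{eq: mild u_n} driven by the same noise $W$, with two initial conditions: $v_0\equiv 0$ and our $u_0\geq 0$. Because $\sigma_n(0)=0$ and the zero function is compatible with each of the Neumann, periodic and Dirichlet boundary conditions, $v\equiv 0$ is the mild solution corresponding to $v_0$. Since $u_0\geq v_0$ a.e.\ on $D$, comparison yields $u_n(t,x)\geq 0$ for all $t\geq 0$ and a.e.\ $x\in D$ with probability one. Combining this with the identification $u(t,x)=u_n(t,x)$ on $[0,\tau_n^\infty]$ and taking the union over $n$ using $\tau_\infty^\infty=\sup_n\tau_n^\infty$ gives \eqref{eq:positivity} on the entire interval $[0,\tau_\infty^\infty)$ on which $u$ is defined.

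The only real obstacle is to confirm that the comparison theorem quoted above applies verbatim in our precise setup, which allows a general self-adjoint uniformly elliptic $A$, a general positive and positive-definite covariance $\Lambda$, and all three boundary conditions rather than just the periodic/whole-space cases treated most explicitly in \cite{Kotelenez1992,Mueller199101}. The required ingredients --- nonnegativity of the heat kernel $G$, nonnegativity of $\Lambda$, and global Lipschitz continuity with $\sigma_n(0)=0$ --- are all present under Assumptions \ref{assumption1}--\ref{assumption4}, so the standard proof (coupling the two solutions and controlling the negative part of their difference by an It\^o-type argument) goes through unchanged. If needed, one can give a self-contained proof by applying It\^o's formula to a smooth approximation of $(u_n)^-:=\max(-u_n,0)$ and showing that its expectation vanishes for every $t$, using the Lipschitz bound on $\sigma_n$ and Gronwall's inequality. \ \rule{0.5em}{0.5em}
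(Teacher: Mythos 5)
Your proposal is correct and follows the same route as the paper, which simply invokes the comparison principle of \cite{Kotelenez1992,Mueller199101} without further elaboration; your additional step of first applying comparison to the truncated Lipschitz equations \eqref{eq: mild u_n} and then passing to $u$ via \eqref{def:loc u} is exactly the natural way to make that citation rigorous. No gaps.
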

This is the result of the comparison principle\cite{Kotelenez1992,Mueller199101} as mentioned before. Let $\tau_n^\infty$ be defined in \eqref{def:tau_n}  and define 
\begin{align}
    I_n(t):=\int_D u(0,x)dx+\int_0^{t\land\tau_n^\infty}\int_D\sigma(u(s,x))\dot{W}(dxds).\label{eq15}
\end{align}
Also, we recall several properties from fundamental solution
\begin{itemize}
    \item $G(t,x,y)\geq 0 \text{ for all } t>0,x,y\in D$ which can be inferred by comparison principle.
   \item $\int_D G(t,x,y)dy\leq 1$ which is the result of conservation of total heat.
\end{itemize}
Then the following lemma gives an upper bound for $L^1$ norm of $u(t,x)$.
\begin{lemma}\label{lem:I_n bound}
For each $n$,
    \begin{align}
        \int_D u(t\land \tau_n^\infty, x)dx\leq I_n(t),\label{eq16}
    \end{align}
where $I_n$ is defined in \eqref{eq15}.
\end{lemma}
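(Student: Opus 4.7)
My plan is to identify the discrepancy $I_n(t) - \int_D u(\tilde t, x)\,dx$, with $\tilde t := t\wedge\tau_n^\infty$, as the negative of the cumulative outward heat flux of $u$ through $\partial D$, which is pathwise non-positive by Hopf's lemma. As a first step, I would apply a stochastic Fubini theorem to the mild formula \eqref{eq: mild u_n} integrated in $x$. Writing $\phi(r, y) := \int_D G(r, x, y)\,dx$ and using $u = u_n$ on $[0, \tau_n^\infty]$, this gives
\begin{equation*}
\int_D u(\tilde t, x)\,dx = \int_D \phi(\tilde t, y)\,u(0, y)\,dy + \int_0^{\tilde t}\!\int_D \phi(\tilde t - s, y)\,\sigma(u(s, y))\,\dot W(dy\,ds).
\end{equation*}
The conservation-of-heat property recalled just above the lemma, combined with symmetry of $G$, gives $\phi \in [0, 1]$; in the Neumann and periodic cases $\phi \equiv 1$, so this right-hand side equals $I_n(t)$ and the lemma holds with equality.

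For the Dirichlet case I would pair the mild identity with the strong-form balance law $du = \nabla\cdot(a\nabla u)\,dt + \sigma(u)\dot W$. Integrating in $x$ and applying the divergence theorem to the drift yields
\begin{equation*}
\int_D u(\tilde t, x)\,dx = \int_D u(0, x)\,dx + \int_0^{\tilde t}\!\int_{\partial D}\bigl(a(x)\nabla u(s, x)\bigr)\cdot \vec n(x)\,dS(x)\,ds + \int_0^{\tilde t}\!\int_D \sigma(u(s, x))\,\dot W(dx\,ds),
\end{equation*}
so that $I_n(t) - \int_D u(\tilde t, x)\,dx$ equals minus the boundary-flux integral. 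By Proposition \ref{prop:positivity}, $u(s, \cdot, \omega) \ge 0$ on $D$ a.s., and $u$ vanishes on $\partial D$; Hopf's lemma then forces the outward normal derivative to satisfy $\nabla u \cdot \vec n \le 0$, and since the tangential components of $\nabla u$ vanish as well (because $u$ is identically zero along $\partial D$), uniform ellipticity of $a$ gives $(a\nabla u)\cdot \vec n \le 0$. Hence the boundary flux is non-positive pathwise, yielding $\int_D u(\tilde t, x)\,dx \le I_n(t)$.

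The main obstacle will be that the mild solution $u_n$ need not have classical regularity up to $\partial D$, so the strong-form balance law and Hopf's lemma do not apply to $u_n$ directly. I would resolve this by a standard approximation: smooth the initial data and replace $\sigma_n$ with a sequence of regularizations supported away from $\partial D$; for these smoother approximations, classical integration by parts and Hopf apply, yielding the inequality; finally, pass to the limit using stability estimates for mild solutions to transfer the bound back to $u_n$.
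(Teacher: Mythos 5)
Your treatment of the Neumann and periodic cases coincides with the paper's: there $\int_D G(t,x,y)\,dx=1$ and the lemma holds with equality. You also correctly recognize that in the Dirichlet case one cannot conclude from the mild form alone (the bound $\phi\le 1$ does not order the two stochastic integrals, since $\dot W$ is signed), and your identification of the deficit $I_n(t)-\int_D u\,dx$ as an outward boundary flux, non-positive because $u\ge 0$ inside and $u=0$ on $\partial D$, is exactly the mechanism driving the paper's inequality.

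However, your Dirichlet argument has a genuine gap at the step where you invoke the ``strong-form balance law'' and the divergence theorem. A mild solution of \eqref{eq1} driven by the colored noise of Assumption \ref{assumption3} is in general only H\"older continuous in space; $\nabla u$ need not exist classically, let alone admit a trace on $\partial D$, so the boundary integral $\int_{\partial D}(a\nabla u)\cdot\vec n\,dS$ is not defined and Hopf's lemma (which applies to classical super/subsolutions of a parabolic equation, not to the SPDE itself) is not available. You acknowledge this and propose to regularize $\sigma_n$ so that the forcing is supported away from $\partial D$, but this is not a ``standard approximation'': you would be changing the equation, and you would then need (i) boundary regularity of the approximating solutions strong enough to justify the pathwise integration by parts, and (ii) convergence of the approximating solutions, their $L^1$ norms, and their stochastic integrals back to those of $u_n$ as the cutoff is removed --- none of which is sketched and the last of which is the hard part. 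The paper avoids all of this by never differentiating $u$: it uses the fact that a mild solution is a weak solution, and tests against $g_m(x)=\int_D G(2^{-m},x,y)\,dy\in C_0^2(D)$, which satisfies $0\le g_m\le 1$, $Ag_m\le 0$ (since $\int_D G(t,x,y)\,dy=\mathbb{P}(\tau_{D,x}>t)$ is decreasing in $t$), and $g_m\uparrow 1$ pointwise. Positivity of $u$ together with $Ag_m\le 0$ makes the drift term $\int_0^t\int_D u\,Ag_m\,dx\,ds$ non-positive, and the inequality follows by letting $m\to\infty$ with dominated convergence and an $L^2$ bound on the stochastic integrals. If you want to salvage your route, the cleanest fix is precisely this dual move: put the derivatives on a well-chosen test function rather than on $u$.
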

\begin{proof}
We prove \eqref{eq16} case by case. If $D$ has Neumann or periodic boundary conditions, then it follows by Assumption \ref{assumption1} that for any $t>0,y\in D$
\begin{align}
    \int_D G(t,x,y)dx=1.\label{eq17}
\end{align}
Thus, it follows by integrating \eqref{eq: mild u_n} that
\begin{align}
    \int_D u(t\land \tau_n^\infty, x)dx=I_n(t) \text{ for all } t>0,n\in\mathbb{N}.\label{eq18}
\end{align}
If Dirichlet boundary conditions are imposed, then recall that Theorem 5.4 in \cite{DaPrato_Zabczyk_2014} shows that under uniformly ellipticity and Lipschitz condition a mild solution is also a weak solution, meaning that for every $g\in C_0^2(D)$ where $C_0^2(D)$ is defined by
\begin{align}
    C_0^2(D):=\Big\{g: g \text{ is continuous and twice-differentiable with }g(x)=0 \text{ for all }x\in\partial D\Big\}.
\end{align}
It follows that for each $n$
\begin{align}
  \begin{split}
        \int_D u(t\land \tau_n^\infty,x)g(x)dx=&\int_D u(0,x)g(x)dx+\int_0^t\int_D u(s\land \tau_n^\infty,x)A g(x)dxds\\
    &+\int_0^t\int_D \sigma(u(s\land \tau_n^\infty,x))g(x)\dot{W}(dxds).
  \end{split}\label{eq19}
\end{align}
Define a sequence of functions $\{g_m(x)\}_{m=1}^\infty$ where $g_m(x)$ is defined by
\begin{align}
    g_m(x)=\int_D G(2^{-m},x,y)dy.\label{eq20}
\end{align}
Thus, $g_m(x)$ is the solution to the parabolic equation $\frac{\partial v}{\partial t} = A v(t,x)$ with initial data $v(0,x) \equiv 1$, evaluated at $t=2^{-m}$. $g_m(x)$ have the properties that
\begin{itemize}
    \item $0 \leq g_m(x) \leq 1$ for all $x \in D$ by the properties of fundamental solution.
    \item $g_m\in C_0^2(D)$ because $G(t,x,y)$ is smooth and we can differentiate under integral.
    \item Let $\{X_x(t)\}_{x\in D,t\geq 0}$ be the stochastic process associated with the stochastic differential equation with generator $A$. Uniformly elliptic condition guarantees the existence of $X_x(t)$. Thus, let
    \begin{align}
        \tau_{D,x}:=\inf\{t>0:X_x(t)\notin D\}.\label{def:tau_D}
    \end{align}
    Then it follows that
    \begin{align}
       \int_D G(t,x,y)dy=\mathbb{P}(\tau_{D,x}>t).
    \end{align}
    Thus, $\int_D G(t,x,y)dy$ is decreasing in $t$. It follows that
    \begin{align}
        Ag_m(x)=\int_D AG(2^{-m},x,y)dy=\int_D\partial_t G(2^{-m},x,y)dy=\partial_t\int_DG(2^{-m},x,y)dy\leq 0.
    \end{align}
    Thus, $Ag_m(x)\leq 0$ for all $x\in D$.
   
    \item For each $x \in D$, $\lim_{m \to \infty} g_m(x) = 1$ by continuity of solution of the parabolic equation.
\end{itemize}

Because $u(t,x)$ is a local mild solution, 
\begin{align}
\begin{split}
    \int_D u(t\land \tau_n^\infty,x)g_m(x)dx=&\int_D u(0,x)g_m(x)dx+\int_0^t\int_D u(s\land \tau_n^\infty,x)A g_m(x)dxds\\
    &+\int_0^t\int_D \sigma(u(s\land \tau_n^\infty,x))g_m(x)\dot{W}(dxds).
\end{split}\label{eq21}
\end{align}
Thus, because $Ag_m(x)\leq 0$ and \eqref{eq:positivity},
\begin{align}
    \int_Du(t\land \tau_n^\infty,x)g_m(x)dx\leq \int_D u(0,x)g_m(x)dx+\int_0^t\int_D \sigma(u(s\land \tau_n^\infty,x))g_m(x)\dot{W}(dxds).\label{eq22}
\end{align}
Observe that the definition of $\tau_n^\infty$ guarantees that $u(t\land \tau_n^\infty,x),u(0,x)$ and $g_m(x)$ are bounded and also $D$ is a bounded domain. By dominated convergence theorem we can take the limit as $m\rightarrow \infty$ then it follows that with probability 1
\begin{align}
\begin{split}
    \lim_{m\rightarrow\infty}\int_Du(t\land \tau_n^\infty,x)g_m(x)dx &=\int_Du(t\land \tau_n^\infty,x)dx,\\
    \lim_{m\rightarrow\infty}\int_D u(0,x)g_m(x)dx &=\int_D u(0,x)dx.
\end{split}\label{eq23}
\end{align}
For martingale part, by taking $L^2$ norm then by Assumption \ref{assumption3}$(C)$ and boundness of $u$ and $g_m(x)$ it follows that
\begin{align}
\begin{split}
    &\mathbb{E}|\int_0^t\int_D \sigma(u(s\land \tau_n^\infty,x))(g_m(x)-1)\dot{W}(dxds)|^2\\
    =&\mathbb{E}\int_0^t\int_D\int_D  \sigma(u(s\land \tau_n^\infty,x))\sigma(u(s\land \tau_n^\infty,y))(g_m(x)-1)(g_m(y)-1)\Lambda(x,y) dxdyds.
\end{split}\label{eq24}
\end{align}
Furthermore, by Assumption \ref{assumption4} it follows that $|\sigma(u(s\land\tau_n^\infty,x))|\leq C(1+|u(s\land\tau_n^\infty)|^{1+\frac{1-\eta}{2\beta}})\leq C(1+n^{\frac{1-\eta}{2\beta}})$ and by Assumption \ref{assumption3}(C) we have $\int_D\int_D\Lambda(x,y)dxdy<\infty$. Combine with $0\leq g_m\leq 1$ for all $x\in D$ and $m$, it follows that
\begin{align}
    \mathbb{E}|\int_0^t\int_D \sigma(u(s\land \tau_n^\infty,x))(g_m(x)-1)\dot{W}(dxds)|^2\leq C_n\int_D\int_D\Lambda(x,y)dxdy.\label{eq24*}
\end{align}
Thus, by dominated convergence theorem it follows that $\int_0^t\int_D \sigma(u(s\land \tau_n^\infty,x))g_m(x)\dot{W}(dxds)$ converges to $\int_0^t\int_D \sigma(u(s\land \tau_n^\infty,x))\dot{W}(dxds)$ in $L^2(D)$. Thus, $\eqref{eq16}$ is proven by taking limit of \eqref{eq21}.
\end{proof}

Define $I(t)$ such that 
\begin{align}
    I(t):=I_n(t),\textrm{ for }t\in [0,\tau_n^\infty].\label{eq25}
\end{align}
It follows that $I(t)$ is a nonnegative local martingale. Define the $I$ stopping times for $M>0$
\begin{align}
    \tau_M^I:=\inf\{t\in [0,\tau_\infty^\infty]:I(t)>M\}.\label{eq26}
\end{align}
\begin{lemma}\label{lem:quad}
    For any $T>0$ and $M>0$,
    \begin{align}
        \mathbb{P}\Big(\sup_{t\in[0, T\land\tau_\infty^\infty]}|u(t)|_{L^1}>M\Big)\leq \frac{|u(0)|_{L^1}}{M}.\label{eq27}
    \end{align}
    In particular,
    \begin{align}
         \mathbb{P}\Big(\sup_{t\in[0, T\land\tau_\infty^\infty]}|u(t)|_{L^1}<\infty\Big)=1.\label{eq28}
    \end{align}
    Furthermore, for any $M>0$, the quadratic variation of $I(t)$ satisfies 
    \begin{align}
        \mathbb{E}\int_0^{ \tau_M^I}\int_D\int_D\Lambda(x,y)\sigma(u(s,x))\sigma(u(s,y))dxdyds\leq M^2.\label{eq29}
\end{align}
\end{lemma}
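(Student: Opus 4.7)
My plan is to handle the three claims in order, using Lemma~\ref{lem:I_n bound} together with standard stopping-time machinery.

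\textbf{Claims \eqref{eq27}--\eqref{eq28}.} I would first check that $I_n$ is a nonnegative continuous $L^2$-martingale. Nonnegativity is immediate from Lemma~\ref{lem:I_n bound} and Proposition~\ref{prop:positivity}, which together give $I_n(t) \geq \int_D u(t \wedge \tau_n^\infty, x)\,dx \geq 0$. The $L^2$-martingale property holds because on $[0, \tau_n^\infty]$ the integrand $\sigma(u(s, \cdot))$ is bounded by $C(1 + n^{1+(1-\eta)/(2\beta)})$ (Assumption~\ref{assumption4}) and $\int_D \int_D \Lambda < \infty$ (Assumption~\ref{assumption3}(C)). Doob's $L^1$ maximal inequality for the nonnegative martingale $I_n$ then gives
\begin{align*}
\mathbb{P}\Bigl(\sup_{t \in [0,T]} \int_D u(t \wedge \tau_n^\infty, x)\,dx > M\Bigr) \leq \mathbb{P}\Bigl(\sup_{t \in [0,T]} I_n(t) > M\Bigr) \leq \frac{\mathbb{E}[I_n(T)]}{M} = \frac{|u(0)|_{L^1}}{M}.
\end{align*}
Since $\tau_n^\infty \nearrow \tau_\infty^\infty$, sending $n \to \infty$ yields \eqref{eq27} by monotone convergence of events; sending $M \to \infty$ then yields \eqref{eq28}.

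\textbf{Claim \eqref{eq29}.} Here I would introduce the truncated double-stopping time $\sigma_n^T := \tau_n^\infty \wedge \tau_M^I \wedge T$. Because $\sigma_n^T \leq \tau_n^\infty$ the processes $I$ and $I_n$ coincide on $[0, \sigma_n^T]$, and because $\sigma_n^T \leq \tau_M^I$ while $I$ has continuous sample paths, $I_n(\sigma_n^T) = I(\sigma_n^T) \leq M$. Thus $I_n$ stopped at $\sigma_n^T$ is a bounded continuous $L^2$-martingale, and applying optional stopping to the compensated process $I_n^2 - [I_n]$ gives
\begin{align*}
\mathbb{E} \int_0^{\sigma_n^T} \int_D \int_D \Lambda(x,y) \sigma(u(s,x)) \sigma(u(s,y))\,dx\,dy\,ds = \mathbb{E}\bigl[I_n(\sigma_n^T)^2\bigr] - |u(0)|_{L^1}^2 \leq M^2.
\end{align*}
Since $\Lambda \geq 0$ (Assumption~\ref{assumption2}) and $\sigma \geq 0$ (Assumption~\ref{assumption4}), the integrand is nonnegative, so first letting $T \to \infty$ and then $n \to \infty$ with $\sigma_n^T \nearrow \tau_M^I$ (valid since $\tau_n^\infty \nearrow \tau_\infty^\infty \geq \tau_M^I$) produces \eqref{eq29} via monotone convergence.

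\textbf{Main obstacle.} The only genuinely delicate point is the pathwise bound $I(\sigma_n^T) \leq M$, which rests on sample-path continuity of the stochastic convolution $I$ together with the identity $I \equiv I_n$ on $[0, \tau_n^\infty]$. Both are consequences of the local well-posedness construction \eqref{eq: mild u_n}--\eqref{def:loc u}, so once they are acknowledged the remainder of the argument is a routine combination of Doob's inequality, It\^o's isometry at a bounded stopping time, and the monotone convergence theorem.
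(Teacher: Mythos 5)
Your proposal is correct and follows essentially the same route as the paper: Doob's maximal inequality applied to the nonnegative martingale $I_n$ (via Lemma~\ref{lem:I_n bound}) for \eqref{eq27}--\eqref{eq28}, and the It\^o isometry/optional stopping identity for $I_n^2$ at the truncated stopping time $t\wedge\tau_n^\infty\wedge\tau_M^I$, with the pathwise bound $I\leq M$ up to $\tau_M^I$ and monotone convergence in $n$ and $T$, for \eqref{eq29}. Your write-up is in fact slightly more explicit than the paper's about why $I_n$ is a genuine $L^2$-martingale and why the stopped value is bounded by $M$.
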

\begin{proof}
    By \eqref{eq16} and Doob's inequality
    \begin{align}
        \mathbb{P}\Big(\sup_{t\in [0,T\land\tau_n^{\infty}]}|u(t)|_{L^1}>M\Big)\leq \mathbb{P}\Big(\sup_{t\in [0,T]}I_n(t)>M\Big)\leq \frac{I_n(0)}{M}=\frac{|u(0)|_{L^1}}{M}.\label{eq30}
    \end{align}
    This bound does not depend on $n$. Therefore, \eqref{eq27} holds. Take $M\uparrow\infty$ then \eqref{eq28} holds. Then apply It\^{o} formula to \eqref{eq15}. For any $M>0,n>0$
    \begin{align}
        \begin{split}
            \mathbb{E}(I(t\land \tau_M^I))^2&=\mathbb{E}(I_n(t\land \tau_M^I))^2\\
            &=\mathbb{E}(I_n(0))^2+\mathbb{E}\int_0^{ t\land\tau_n^\infty\land\tau_M^I}\int_D\int_D\Lambda(x,y)\sigma(u(s,x))\sigma(u(s,y))dxdyds.
        \end{split}\label{eq31}
    \end{align}
    Therefore,
    \begin{align}
        \mathbb{E}\int_0^{ t\land\tau_n^\infty\land\tau_M^I}\int_D\int_D\Lambda(x,y)\sigma(u(s,x))\sigma(u(s,y))dxdyds\leq M^2.\label{eq32}
    \end{align}
    This bound does not depend on $n$ or $t$. Thus, \eqref{eq29} holds.
\end{proof}
\section{Moment estimates on the stochastic convolution}
\noindent\textit{Proof of Theorem \ref{thm:factorization}}. Let $p$ be large enought such that $\frac{1+\beta}{p}<\frac{1-\eta}{2}-\frac{\beta}{p-2}$, where $\beta$ and $\eta$ are given in Assumption \ref{assumption3} in section 2 and assume that $\varphi(t,x)$ is adapted and bounded.
Define the stochastic convolution
\begin{align}
    Z^{\varphi}(t,x)=\int_0^t\int_D G(t-s,x,y)\varphi(s,y)\dot{W}(dyds).
\end{align}
Use Da Prato and Zabczyk's factorization method in Theorem 5.10\cite{DaPrato_Zabczyk_2014}, choose $\alpha\in(\frac{1+\beta}{p},\frac{1-\eta}{2}-\frac{\beta}{p-2})$. Define
\begin{align}
    Z_{\alpha}^{\varphi}(t,x)=\int_0^t \int_D(t-s)^{-\alpha}G(t-s,x,y)\varphi(s,y)\dot{W}(dyds).\label{eq35}
\end{align}
Then by the factorization lemma
\begin{align}
     Z^{\varphi}(t,x)=C\int_0^t\int_D (t-s)^{\alpha-1}G(t-s,x,y)Z_{\alpha}^{\varphi}(s,y)dyds.\label{eq36}
\end{align}
Now using H$\ddot{\textrm{o}}$lder's inequality we have 
\begin{align}
    \begin{split}
        \sup_{t\in [0,T]}\sup_{x\in D}|Z^\varphi(t,x)|\leq& \sup_{t\in [0,T]}\sup_{x\in D}C \Big(\int_0^t\int_D(t-s)^{\frac{(\alpha-1)p}{p-1}}G^{\frac{p}{p-1}}(t-s,x,y)dyds\Big)^{\frac{p-1}{p}}\\
        &\times \Big(\int_0^t\int_D |Z_\alpha^\varphi(s,y)|^pdyds\Big)^{\frac{1}{p}}.
    \end{split}\label{eq38}
\end{align}
Then by Assumption \ref{assumption3}(A) and $0\leq \int_DG(t,x,y)dy\leq 1$ it follows that
\begin{align}
    \begin{split}
        \int_0^t\int_D(t-s)^{\frac{(\alpha-1)p}{p-1}}G^{\frac{p}{p-1}}(t-s,x,y)dyds&\leq \int_0^t (t-s)^{\frac{(\alpha-1)p-\beta}{p-1}}\int_DG(t-s,x,y)dyds\\
        &\leq \int_0^t (t-s)^{\frac{(\alpha-1)p-\beta}{p-1}}ds.
    \end{split}\label{eq38*}
\end{align}
Since we choose $p,\alpha$ such that $\alpha p-\beta-1>0$, $(t-s)^{\frac{(\alpha-1)p-\beta}{p-1}}$ is integrable between $0$ and $t$. Thus, this yields
\begin{align}
    \mathbb{E}\sup_{t\in [0,T]}\sup_{x\in D}|Z^{\varphi}(t,x)|^p\leq C T^{\alpha p-\beta-1}\mathbb{E}\int_0^T\int_D |Z_\alpha^\varphi(t,x)|^pdxdt.\label{eq39}
\end{align}
Using BDG inequality then it follows that
\begin{align}
    \begin{split}
        \mathbb{E}|Z_{\alpha}^{\varphi}(t,x)|^p\leq C_p\mathbb{E}\Big(&\int_0^t (t-s)^{-2\alpha}\int_D\int_D G(t-s,x,y_1)G(t-s,x,y_2)\\
    &\times\varphi(s,y_1)\varphi(s,y_2)\Lambda(y_1,y_2)dy_1dy_2ds\Big)^{\frac{p}{2}}.
    \end{split}\label{eq40}
\end{align}
Thus, 
\begin{align}
    \begin{split}
        &\mathbb{E}\int_0^T\int_D |Z_\alpha^\varphi(t,x)|^pdxdt\\
        \leq & C_p\mathbb{E}\int_0^T\int_D\Big(\int_0^t (t-s)^{-2\alpha}\int_D\int_D G(t-s,x,y_1)G(t-s,x,y_2)\\
        &\times\varphi(s,y_1)\varphi(s,y_2)\Lambda(y_1,y_2)dy_1dy_2ds\Big)^{\frac{p}{2}}dxdt.
    \end{split}\label{eq41}
\end{align}
By a duality argument it follows that with probability 1
\begin{align}
    \begin{split}
        &\int_0^T\int_D\Big(\int_0^t (t-s)^{-2\alpha}\int_D\int_D G(t-s,x,y_1)G(t-s,x,y_2)\\
        &\times\varphi(s,y_1)\varphi(s,y_2)\Lambda(y_1,y_2)dy_1dy_2ds\Big)^{\frac{p}{2}}dxdt\\
        =&\sup_{|h|_{L^{\frac{p}{p-2}}}=1}\\
        &\Big(\int_0^T\int_D\int_0^t\int_D\int_Dh(t,x)(t-s)^{-2\alpha}G(t-s,x,y_1)G(t-s,x,y_2)\\
        &\times\varphi(s,y_1)\varphi(s,y_2)\Lambda(y_1,y_2)dy_1dy_2dsdxdt\Big)^{\frac{p}{2}}.
    \end{split}\label{eq41*}
\end{align}
By Assumption \ref{assumption2} and positivity of $G(t,x,y)$ and $\Lambda(x,y)$ we can treat $(t-s)^{-2\alpha}G(t-s,x,y_2)\Lambda(y_1,y_2)dy_1dy_2dsdxdt$ as a measure then use H$\ddot{\textrm{o}}$lder's inequality with parameters $\frac{p}{p-2}$ and $\frac{p}{2}$ to show that for any $h$ such that $|h|_{L^{\frac{p}{p-2}}}=1$ we have with probability 1
\begin{align}
   \begin{split}
        &\Big(\int_0^T\int_D\int_0^t\int_D\int_Dh(t,x)(t-s)^{-2\alpha}G(t-s,x,y_1)G(t-s,x,y_2)\\
        &\times\varphi(s,y_1)\varphi(s,y_2)\Lambda(y_1,y_2)dy_1dy_2dsdxdt\Big)^{\frac{p}{2}}\\
    \leq &\Big(\int_0^T\int_D\int_0^t\int_D\int_D|h(t,x)|^{\frac{p}{p-2}}(t-s)^{-2\alpha}G^{\frac{p}{p-2}}(t-s,x,y_1)G(t-s,x,y_2)\\
    &\times\Lambda(y_1,y_2)dy_1dy_2dsdxdt\Big)^{\frac{p-2}{2}}\\
    &\times\Big(\int_0^T\int_D\int_0^t\int_D\int_D (t-s)^{-2\alpha}|\varphi(s,y_1)\varphi(s,y_2)|^{\frac{p}{2}}G(t-s,x,y_2)\Lambda(y_1,y_2)dy_1dy_2dsdxdt\Big)\\
    &=:A^{\frac{p-2}{2}}B.
   \end{split}\label{eq41**}
\end{align}
Now by Assumption \ref{assumption3}(A), $G^{\frac{p}{p-2}}(t-s,x,y_1)=G^{\frac{2}{p-2}}(t-s,x,y_1)G(t-s,x,y_1)\leq C(t-s)^{-\frac{2\beta}{p-2}}G(t-s,x,y_1)$. It follows that with probability 1
\begin{align}
    \begin{split}
    A \leq \int_0^T\int_D\int_0^t\int_D\int_D &|h(t,x)|^{\frac{p}{p-2}}(t-s)^{-2\alpha-\frac{2\beta}{p-2}}G(t-s,x,y_1)\\
    &\times G(t-s,x,y_2)\Lambda(y_1,y_2)dy_1dy_2dsdxdt.
    \end{split}\label{eq42}
\end{align}
By Assumption \ref{assumption3}(B) and \eqref{eq42},
\begin{align}
    A\leq C\int_0^T\int_D\int_0^t|h(t,x)|^{\frac{p}{p-2}}(t-s)^{-2\alpha-\frac{2\beta}{p-2}-\eta}dsdxdt\text{ with probability 1}.
\end{align}
Since we chose $\alpha,p$ such that $2\alpha+\frac{2\beta}{p-2}+\eta<1$ and $|h|_{L^{\frac{p}{p-2}}}=1$, it follows that with probability 1
\begin{align}
    \begin{split}
       A \leq & C T^{(1-\eta)-2\alpha-\frac{2\beta}{p-2}}\int_0^T\int_D |h(t,x)|^{\frac{p}{p-2}}dxdt\leq CT^{(1-\eta)-2\alpha-\frac{2\beta}{p-2}}.
    \end{split}\label{eq43}
\end{align}
Then by the property that $0\leq \int_DG(t-s,x,y_2)dx\leq 1$
\begin{align}
    \begin{split}
  B\leq &\int_0^T\int_0^t\int_D\int_D(t-s)^{-2\alpha}|\varphi(s,y_1)\varphi(s,y_2)|^{\frac{p}{2}}\Lambda(y_1,y_2)dy_1dy_2dsdt\\
  =&\int_0^T\int_s^T\int_D\int_D(t-s)^{-2\alpha}|\varphi(s,y_1)\varphi(s,y_2)|^{\frac{p}{2}}\Lambda(y_1,y_2)dy_1dy_2dtds\\
  =&C\int_0^T\int_D\int_D(T-s)^{1-2\alpha}|\varphi(s,y_1)\varphi(s,y_2)|^{\frac{p}{2}}\Lambda(y_1,y_2)dy_1dy_2ds\\
  \leq & C T^{1-2\alpha}\int_0^T\int_D\int_D|\varphi(s,y_1)\varphi(s,y_2)|^{\frac{p}{2}}\Lambda(y_1,y_2)dy_1dy_2ds\text{ with probability 1.}
    \end{split}\label{eq44}
\end{align}
Thus, combining \eqref{eq43} and \eqref{eq44} it follows that
\begin{align}
    \begin{split}
        &\Big(\int_0^T\int_D\int_0^t\int_D\int_Dh(t,x)(t-s)^{-2\alpha}G(t-s,x,y_1)G(t-s,x,y_2)\\
        &\times\varphi(s,y_1)\varphi(s,y_2)\Lambda(y_1,y_2)dy_1dy_2dsdxdt\Big)^{\frac{p}{2}}\\
        \leq & A^{\frac{p-2}{2}}B\\
        \leq & CT^{\frac{(1-\eta)(p-2)}{2}-\beta-\alpha p+1}\int_0^T\int_D\int_D|\varphi(s,y_1)\varphi(s,y_2)|^{\frac{p}{2}}\Lambda(y_1,y_2)dy_1dy_2ds\text{ with probability 1.}
    \end{split}\label{eq45}
\end{align}
Then by \eqref{eq39} and \eqref{eq45} it follows that
\begin{align}
   \begin{split}
          \mathbb{E}\sup_{t\in [0,T]}\sup_{x\in D}|Z^{\varphi}(t,x)|^p\leq &C_p T^{\frac{(1-\eta)(p-2)}{2}-2\beta}\mathbb{E}
\int_0^T\int_D\int_D|\varphi(s,y_1)\varphi(s,y_2)|^{\frac{p}{2}}\Lambda(y_1,y_2)dy_1dy_2ds.
     \end{split}\label{eq45*}
\end{align}
Thus, \eqref{eq34} holds.

\section{Non-explosion of $u(t,x)$}
Let $M>0$ be arbitrary. In this section we will show that $u(t,x)$ cannot explode before time $\tau_M^I$. Then we can take the limit as $M\rightarrow \infty$ to prove that explosion cannot ever occur.

Fix $M>0$ and define a sequence of stopping time $\rho_n$. These stopping times depend on the choice of $M$.
\begin{align}
    \rho_0=\inf\{t\in [0,\tau_M^I]:|u(t)|_{L^\infty}=2^m\text{ for some }m\in\{1,2,3,\dots\}\}.\label{eq46}
\end{align}
$\rho_0=\tau_M^I$ if there is no doubling or halving before $\tau_M^I$. If $|u(\rho_n)|_{L^\infty}=2^m$ for some $n$ and $m\geq 2$ we define
\begin{align}
    \rho_{n+1}=\inf\Big\{t\in [\rho_n,\tau_M^I]:|u(t)|_{L^\infty}\geq 2^{m+1} \text{ or } |u(t)|_{L^\infty}\leq 2^{m-1}.\Big\},\label{eq47}
\end{align}
and if $|u(\rho_n)|_{L^\infty}=2$ then
\begin{align}
    \rho_{n+1}=\inf\Big\{t\in [\rho_n,\tau_M^I]:|u(t)|_{L^\infty}\geq 2^2.\Big\},\label{eq48}
\end{align}
$\rho_n=\tau_M^I$ if process stops doubling or halving. This basically follows the same idea as in \cite{salins2025}. The following lemma shows that the doubling probability is bounded by the quadratic variation of $I(t)$.
\begin{lemma}\label{lem:double u}
    For any $p$ large enough such that $\frac{1+\beta}{p}<\frac{1-\eta}{2}-\frac{\beta}{p-2}$, there exists a nonrandom constant $C_p>0$ and for any $M>0$ there exists a nonrandom constant $m_0=m_0(M)>0$ such that for any $n\in \mathbb{N}$, and $m>m_0$
    \begin{align}
\begin{split}
&\mathbb{P}\Big(|u(\rho_{n+1})|_{L^\infty}=2^{m+1}\Big||u(\rho_{n})|_{L^\infty}=2^{m}\Big)\\
&\leq C_p M^{\frac{(1-\eta)(p-2)}{2\beta}-2}\mathbb{E}\Big(\int_{\rho_n}^{\rho_{n+1}}\int_D\int_D\sigma(u(s,y_1))\sigma(u(s,y_2))\Lambda(y_1,y_2)dy_1dy_2ds\Big| |u(\rho_{n})|_{L^\infty}=2^{m}\Big).
\end{split}\label{eq49}
    \end{align}
    Importantly, the constant $C_p$ is independent of $m>m_0$ and so is the exponent of $M$.
\end{lemma}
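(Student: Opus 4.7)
The plan is to invoke the strong Markov property at the stopping time $\rho_n$, reduce the doubling event to a large-deviation estimate for the restarted stochastic convolution, and then apply Theorem \ref{thm:factorization} with a tuned deterministic time horizon.

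First I would write, for $r\in[0,\rho_{n+1}-\rho_n]$,
\[ u(\rho_n+r,x) = \int_D G(r,x,y)\, u(\rho_n,y)\, dy + \tilde{Z}(r,x), \]
where $\tilde{Z}$ is the stochastic convolution of $\sigma(u)$ restarted at $\rho_n$. The deterministic part admits two complementary bounds: it is bounded by $|u(\rho_n)|_{L^{\infty}}=2^m$ via the $L^{\infty}$ contraction property $\int_D G(r,x,y)\,dy\leq 1$, and by $CMr^{-\beta}$ via the $L^1\to L^{\infty}$ smoothing implied by Assumption \ref{assumption3}(A) combined with the a priori bound $|u(\rho_n)|_{L^1}\leq I(\rho_n)\leq M$ from Lemma \ref{lem:I_n bound}. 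Consequently, on the doubling event $\{|u(\rho_{n+1})|_{L^{\infty}}=2^{m+1}\}$ the stochastic convolution must satisfy $\sup_{r\in[0,\rho_{n+1}-\rho_n]}|\tilde{Z}(r)|_{L^{\infty}}\geq c\cdot 2^m$ for a fixed $c>0$.

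Next I would apply Markov's inequality at level $2^m$ together with Theorem \ref{thm:factorization} applied to the restarted convolution on the deterministic horizon $T_{\ast}:=c(M/2^m)^{1/\beta}$ (the crossover time of the two deterministic bounds). On $[\rho_n,\rho_{n+1}]$ the constraint $|u|_{L^{\infty}}\leq 2^{m+1}$ together with Assumption \ref{assumption4} gives $\sup|\sigma(u)|_{L^{\infty}}^{p-2}\leq C (2^{(m+1)\chi})^{p-2}$ with $\chi=1+\frac{1-\eta}{2\beta}$. Combining these ingredients produces a prefactor
\[ C_p\, 2^{-mp}\, T_{\ast}^{\frac{(1-\eta)(p-2)}{2}-2\beta}\, 2^{(m+1)\chi(p-2)} \]
multiplied by the conditional expectation of the quadratic variation integral. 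The key algebraic identity $\chi(p-2)-p=\frac{(1-\eta)(p-2)}{2\beta}-2=:a$, combined with $T_{\ast}^{\frac{(1-\eta)(p-2)}{2}-2\beta}=c^{a\beta}M^a 2^{-ma}$, makes all $m$-dependent powers of $2$ cancel, leaving precisely the factor $C_p M^a$ claimed in \eqref{eq49}.

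The main obstacle is that Theorem \ref{thm:factorization} is stated for a deterministic horizon, while the interval $[\rho_n,\rho_{n+1}]$ is random. I would split the analysis: the contribution from $[\rho_n,\rho_n+T_{\ast}]$ is handled directly by the above application; for the remainder $[\rho_n+T_{\ast},\rho_{n+1}]$, by the choice of $T_{\ast}$ the deterministic semigroup component has already decayed strictly below $2^m$, so one can invoke the strong Markov property again to restart the argument with a fresh, smaller deterministic part, and sum the contributions geometrically while preserving a single quadratic variation factor on the right. The threshold $m_0=m_0(M)$ enters precisely to ensure that $T_{\ast}$ lies in a useful range (e.g.\ $T_{\ast}\leq 1$, equivalently $2^m\gtrsim M$) so that the smoothing bound $CMr^{-\beta}$ is meaningful on the time scale of interest and so that the crossover computation above is legitimate.
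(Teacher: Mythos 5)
Your overall architecture matches the paper's: the semigroup decomposition $u(\rho_n+t)=S_n(t)+Z_n(t)$, the two complementary bounds on the deterministic part ($|S_n(t)|_{L^\infty}\leq 2^m$ by $L^\infty$-contractivity and $|S_n(t)|_{L^\infty}\leq CMt^{-\beta}$ by smoothing plus $|u(\rho_n)|_{L^1}\leq I(\rho_n)\leq M$), the crossover time $T_m\sim (M/2^m)^{1/\beta}$, the Chebyshev--plus--Theorem~\ref{thm:factorization} step, the bound $|\sigma(u)|_{L^\infty}^{p-2}\leq C2^{\chi(p-2)m}$ on $[\rho_n,\rho_{n+1}]$, and the algebraic cancellation of all powers of $2^m$ leaving $M^{\frac{(1-\eta)(p-2)}{2\beta}-2}$ are all exactly as in the paper. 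The role you assign to $m_0(M)$ is also correct.

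The one genuine gap is the step you yourself flag as the ``main obstacle'': passing from the random interval $[\rho_n,\rho_{n+1}]$ to a deterministic horizon. Your proposed fix --- handle $[\rho_n,\rho_n+T_*]$ directly, then restart on $[\rho_n+T_*,\rho_{n+1}]$ via the strong Markov property and sum geometrically --- is both unnecessary and not workable as described: the ``fresh, smaller deterministic part'' at time $\rho_n+T_*$ is only smaller on the event that $Z_n$ stayed small on $[0,T_*]$, and on that very event there is no remainder interval to handle, because $\rho_{n+1}$ has already occurred as a \emph{halving}. This is the paper's key observation, and it closes the argument in one line: on the event $\{\sup_{t\leq T_m}|Z_n(t)|_{L^\infty}\leq 2^{m-2}\}$ one has simultaneously $\sup_{t\leq T_m}|u(\rho_n+t)|_{L^\infty}\leq 2^m+2^{m-2}<2^{m+1}$ (so no doubling can occur before $T_m$) and $|u(\rho_n+T_m)|_{L^\infty}\leq |S_n(T_m)|_{L^\infty}+2^{m-2}\leq 2^{m-1}$ (so the halving threshold is reached by $T_m$). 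Hence $\rho_{n+1}-\rho_n\leq T_m$ and the doubling event is \emph{contained} in the deterministic-horizon event $\{\sup_{t\leq T_m}|Z_n(t)|_{L^\infty}>2^{m-2}\}$, to which a single application of Theorem~\ref{thm:factorization} applies. Note also that this requires using both bounds on $S_n$ at the threshold $2^{m-2}$ (not merely $2^m$ as in your first paragraph), which is why the paper takes $T_m=(CM/2^{m-2})^{1/\beta}$. With this event inclusion substituted for your iteration, your computation goes through verbatim.
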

\begin{proof}
    Let $M>0$ and assume that $2^m=|u(\rho_n)|_{L^\infty}$. By the semigroup property, the mild solution for $t\in [0,\rho_{n+1}-\rho_n]$, satisfies
    \begin{align}
        \begin{split}
            u(t+\rho_n,x)=&\int_D G(t,x,y)u(\rho_n,y)dy\\
            &+\int_0^t\int_DG(t,x,y)\sigma(u(s+\rho_n,y))\mathbbm{1}_{\{s\leq \rho_{n+1}-\rho_n\}}\dot{W}(dyd(s+\rho_n))\\
            =:&S_n(t,x)+Z_n(t,x).
        \end{split}\label{eq50}
    \end{align}
    By Assumption \ref{assumption3}(A) and Lemma \ref{lem:I_n bound}\eqref{eq16} it follows that for $t\in (0,1)$
    \begin{align}
        |S_n(t)|_{L^\infty}\leq CMt^{-\beta}.\label{eq51}
    \end{align}
    Choose $m>m_0(M)$ so that $T_m=\Big(\frac{CM}{2^{m-2}}\Big)^{\frac{1}{\beta}}<1$ and $S_n(T_m)\leq 2^{m-2}$.
    Theorem \ref{thm:factorization} with
    \begin{align*}
        \varphi(t,x):=\sigma(u(\rho_n+t,x))\mathbbm{1}_{\{t\leq \rho_{n+1}-\rho_n\}},
    \end{align*}
    and the Chebyshev inequality and Theorem \ref{thm:factorization} guarantee that 
    \begin{align}
    \begin{split}
        &\mathbb{P}\Big(\sup_{t\leq T_m}\sup_{x\in D}|Z_n((t+\rho_n),x)|>2^{m-2}\big| |u(\rho_n)|_{L^\infty}=2^m\Big)\\
        \leq & C 2^{-p(m-2)}\mathbb{E}\Big(\sup_{t\leq T_m}\sup_{x\in D}|Z_n((t+\rho_n),x)|^p\big||u(\rho_n)|_{L^\infty}=2^m\Big)\\
        \leq & C 2^{-p(m-2)}T_m^{\frac{(1-\eta)(p-2)}{2}-2\beta}\\
        &\times\mathbb{E}\int_{\rho_n}^{(\rho_n+T_m)\land  \rho_{n+1}}\int_D\int_D|\sigma(u(s,y_1))\sigma(u(s,y_2))|^{\frac{p}{2}}\Lambda(y_1,y_2)dy_1dy_2ds\\
        \leq & C 2^{-p(m-2)}T_m^{\frac{(1-\eta)(p-2)}{2}-2\beta}\\
        &\times\mathbb{E}\sup_{t\in[0,T_m\land(\rho_{n+1}-\rho_n)]}|\sigma(t+\rho_n)|_{L^\infty(D)}^{p-2}\\
        &\times\int_{\rho_n}^{(\rho_n+T_m)\land  \rho_{n+1}}\int_D\int_D|\sigma(u(s,y_1))\sigma(u(s,y_2))|\Lambda(y_1,y_2)dy_1dy_2ds.\\
    \end{split}\label{eq52}
\end{align}
The definition of $\rho_n$ and \eqref{eq9} guarantee that, conditioning on $|u(\rho_n)|_{L^\infty}=2^m$,
\begin{align}
    |\sigma(t+\rho_n)|_{L^\infty(D)}^{p-2}\mathbbm{1}_{\{t\leq \rho_{n+1}-\rho_n\}}\leq C 2^{(1+\frac{1-\eta}{2\beta})(p-2)m}.\label{eq53}
\end{align}
Thus, it follows that
\begin{align}
     \begin{split}
        &\mathbb{P}\Big(\sup_{t\leq T_m}\sup_{x\in D}|Z_n((t+\rho_n),x)|>2^{m-2}\big| |u(\rho_n)|_{L^\infty}=2^m\Big)\\
        \leq & C 2^{(1+\frac{1-\eta}{2\beta})(p-2)m-p(m-2)}T_m^{\frac{(1-\eta)(p-2)}{2}-2\beta}\\
        &\times\mathbb{E}\int_{\rho_n}^{(\rho_n+T_m)\land  \rho_{n+1}}\int_D\int_D|\sigma(u(s,y_1))\sigma(u(s,y_2))|\Lambda(y_1,y_2)dy_1dy_2ds\\
        =& C 2^{(1+\frac{1-\eta}{2\beta})(p-2)m-p(m-2)}\Big(\frac{CM}{2^{m-2}}\Big)^{\frac{(1-\eta)(p-2)}{2\beta}-2}\\
        &\times\mathbb{E}\int_{\rho_n}^{(\rho_n+T_m)\land  \rho_{n+1}}\int_D\int_D|\sigma(u(s,y_1))\sigma(u(s,y_2))|\Lambda(y_1,y_2)dy_1dy_2ds\\
        \leq & C M^{\frac{(1-\eta)(p-2)}{2\beta}-2}\mathbb{E}\int_{\rho_n}^{(\rho_n+T_m)\land  \rho_{n+1}}\int_D\int_D|\sigma(u(s,y_1))\sigma(u(s,y_2))|\Lambda(y_1,y_2)dy_1dy_2ds\\
        \leq &C M^{\frac{(1-\eta)(p-2)}{2\beta}-2}\mathbb{E}\int_{\rho_n}^{  \rho_{n+1}}\int_D\int_D|\sigma(u(s,y_1))\sigma(u(s,y_2))|\Lambda(y_1,y_2)dy_1dy_2ds.
    \end{split}\label{eq54}
\end{align}
The powers of $2$ disappear because 
\begin{align}
    (1+\frac{1-\eta}{2\beta})(p-2)m-p(m-2)-(\frac{(1-\eta)(p-2)}{2\beta}-2)(m-2)=2(p-2),
\end{align}
which does not depend on $m$. We incorporate this constant in $C$. Now we prove that if the event
\begin{align}
    \Big\{\sup_{t\leq T_m}\sup_{x\in D}|Z_n(t,x)|\leq 2^{m-2}\Big\}\label{eq55}
\end{align}
occurs, then $|u(\rho_{n+1})|_{L^\infty}$ falls to $2^{m-1}$ before it can reach $2^{m+1}$. This is because by the previous argument we have
\begin{itemize}
    \item $\sup_{t\leq T_m}|u(\rho_n+t)|_{L^\infty}\leq\sup_{t\leq T_m}|S_n(t,x)|_{L^\infty}+\sup_{t\leq T_m}|Z_n(t,x)|_{L^\infty}\leq  2^m+2^{m-2}<2^{m+1}$.
    \item $|u(\rho_n+T_m)|_{L^\infty}\leq |S_n(T_m,x)|_{L^\infty}+\sup_{t\leq T_m}|Z_n(t,x)|_{L^\infty}\leq  2^{m-2}+2^{m-2}=2^{m-1}$.
\end{itemize}
Thus, $\rho_{n+1}-\rho_n\leq T_m$ a.s. and this implies that if the event \eqref{eq55} occurs then $|u(\rho_n+t)|_{L^\infty}$ falls to the level $2^{m-1}$ before it can rise to the level $2^{m+1}$. Therefore, combine \eqref{eq54} and the argument for \eqref{eq55} then $\eqref{eq49}$ holds.
\end{proof}

\noindent\textit{Proof of Theorem \ref{thm:nonexplosion}}. Fix $M>0$ and let $\rho_n$ be defined from \eqref{eq47} and $m_0$ is chosen from Lemma \ref{lem:double u}. Then we add up the conditional probabilities \eqref{eq49} to see that for any $n\in \mathbb{N}$,
\begin{align}
    \begin{split}
        &\mathbb{P}\Big(|u(\rho_{n+1})|_{L^\infty}=2|u(\rho_n)|_{L^\infty}\text{ and }|u(\rho_n)|_{L^\infty}>2^{m_0}\Big)\\
        =&\sum_{m=m_0}^\infty C_p M^{\frac{(1-\eta)(p-2)}{2\beta}-2}\\
        &\times\mathbb{E}\Big(\int_{\rho_n}^{\rho_{n+1}}\int_D\int_D\sigma(u(s,y_1))\sigma(u(s,y_2))\Lambda(y_1,y_2)dy_1dy_2ds\Big| |u(\rho_{n})|_{L^\infty}=2^{m}\Big)\\
        &\times \mathbb{P}(|u(\rho_n)|_{L^\infty}=2^m)\\
        \leq & \sum_{m=m_0}^\infty C_p M^{\frac{(1-\eta)(p-2)}{2\beta}-2}\mathbb{E}\Big(\int_{\rho_n}^{\rho_{n+1}}\int_D\int_D\sigma(u(s,y_1))\sigma(u(s,y_2))\Lambda(y_1,y_2)dy_1dy_2ds\Big).
    \end{split}\label{eq56}
\end{align}
Then add these probabilities with respect to $n$ it follows that
\begin{align}
    \begin{split}
        &\sum_{n=1}^\infty\mathbb{P}\Big(|u(\rho_{n+1})|_{L^\infty}=2|u(\rho_n)|_{L^\infty}\text{ and }|u(\rho_n)|_{L^\infty}>2^{m_0}\Big)\\
    \leq &  C_p M^{\frac{(1-\eta)(p-2)}{2\beta}-2}\mathbb{E}\Big(\int_{0}^{\tau_M^1}\int_D\int_D\sigma(u(s,y_1))\sigma(u(s,y_2))\Lambda(y_1,y_2)dy_1dy_2ds\Big).
    \end{split}\label{eq57}
\end{align}
By Lemma \ref{lem:quad} \eqref{eq29} it follows that
\begin{align}
    \sum_{n=1}^\infty\mathbb{P}\Big(|u(\rho_{n+1})|_{L^\infty}=2|u(\rho_n)|_{L^\infty}\text{ and }|u(\rho_n)|_{L^\infty}>2^{m_0}\Big)<\infty.\label{eq58}
\end{align}
Therefore, the Borel-Cantelli Lemma guarantees that with probability one, the events $\Big(|u(\rho_{n+1})|_{L^\infty}=2|u(\rho_n)|_{L^\infty}\text{ and }|u(\rho_n)|_{L^\infty}>2^{m_0}\Big)$ only happens a finite number of times. This proves that
\begin{align}
    \mathbb{P}(\tau_M^1<\tau_{\infty}^\infty)=1.\label{eq59}
\end{align}
By Lemma \ref{lem:quad}, let $\gamma\in (0,1)$ and $T>0$ be arbitrary. Choose $M>0$ big enough such that
\begin{align}
    \mathbb{P}(\tau_M^1\geq T\land \tau_\infty^\infty)>1-\gamma.\label{eq60}
\end{align}
Combine \eqref{eq59} and \eqref{eq60} then
\begin{align}
    \mathbb{P}(T<\tau_{\infty}^\infty)>1-\gamma.
\end{align}
Since the choice of $\gamma>0$ is arbitrary, then it follows that
\begin{align}
    \mathbb{P}(T<\tau_{\infty}^\infty)=1.
\end{align}
This is true for arbitrary $T>0$ and therefore,
\begin{align}
    \mathbb{P}(\tau_\infty^\infty=\infty)=1,
\end{align}
and $u(t,x)$ cannot explode in finite time.

\newpage
\bibliographystyle{plain} 
\bibliography{Bioliography}
\end{document}